\title{Interaction of Poisson hyperplane processes\\ and convex bodies}
\author{Rolf Schneider}
\date{}
\newcommand{\Sd}{{\mathbb S}^{d-1}}
\newcommand{\R}{{\mathbb R}}
\newcommand{\K}{{\mathcal K}}
\newcommand{\bP}{{\mathbb P}}
\newcommand{\Rd}{{\mathbb R}^d}
\newcommand{\N}{{\mathbb N}}
\newcommand{\Ha}{\mathcal{H}}
\newcommand{\D}{{\rm d}}
\newcommand{\bE}{{\mathbb E}\,}
  \renewcommand{\exp}{{\rm exp}\,}
\newtheorem{theorem}{Theorem}%[section]
\newtheorem{lemma}{Lemma}%[section]
\newtheorem{corollary}{Corollary}%[section]
\begin{document}
\maketitle

\begin{abstract}
Given a stationary and isotropic Poisson hyperplane process and a convex body $K$ in $\R^d$, we consider the random polytope defined by the intersection of all closed halfspaces containing $K$ that are bounded by hyperplanes of the process not intersecting $K$. We investigate how well the expected mean width of this random polytope approximates the mean width of $K$ if the intensity of the hyperplane process tends to infinity.\\[1mm]
{\em Keywords:} Poisson hyperplane process; convex body; mean width; approximation\\[1mm]
2010 Mathematics Subject Classification: Primary 60D05, Secondary 52A27
\end{abstract}

\section{Introduction}\label{sec1}

Ever since the seminal papers of R\'{e}nyi and Sulanke \cite{RS63, RS64, RS68}, the approximation of convex bodies by random polytopes has been a much-studied branch of stochastic geometry. A typical object of investigation is the convex hull of $n$ independent, identically distributed random points in a given convex body in $\R^d$. A typical question is that for the asymptotic behavior of some geometric functional of this convex hull, as the number $n$ of random points tends to infinity. Surveys, at least partially devoted to this topic, are \cite{Bar07, Bar08}, \cite{Buc85}, \cite{Hug13}, \cite{Rei10}, \cite{Sch88, Sch18}, \cite[Sect. 8.2]{SW08}, \cite{WW93}. The precise asymptotic formulas that have been obtained usually require that the convex body $K$ under consideration is either sufficiently smooth (where sometimes the existence of freely rolling balls may be sufficient) or a polytope. For general convex bodies, one has a precise asymptotic formula for the volume, denoted by $V$. Let $K\subset \R^d$ be a convex body with $V(K)=1$ (say), and let $K_n$ denote the convex hull of $n$ independent random points in $K$ with uniform distribution. Then, as shown in \cite{Schu94},
\begin{equation}\label{1.1}
\lim_{n\to\infty} n^{2/(d+1)}[1-\bE V(K_n)]=c(d) \int_{\partial K} \kappa^{1/(d+1)}\,\D\Ha^{d-1},
\end{equation}
with an explicit constant $c(d)$, where $\bE$ denotes mathematical expectation. Here $\kappa$ is the generalized Gau{\ss}--Kronecker curvature (which exists almost everywhere on $\partial K$) and $\Ha^{d-1}$ is the $(n-1)$-dimensional Hausdorff measure. However, for most convex bodies (in the sense of Baire category, see \cite{Zam80}) the right-hand side of (\ref{1.1}) is zero, so that (\ref{1.1}) gives only information on an upper bound for the order of $V(K)-\bE V(K_n)$, which should be complemented by information on a lower bound. In this sense, it was proved in \cite{BL88} that
\begin{equation}\label{1.2}
n^{-1}\ln^{d-1}n \ll V(K) -\bE V(K_n) \ll n^{-2/(d+1)}.
\end{equation}
Here the notation $f(n)\ll g(n)$ means that there exists a constant $c>0$ such that $f(n)\le cg(n)$ for all sufficiently large $n\in \N$. The constant $c$ has to be independent of $n$, but it may depend on the dimension $d$, the convex body $K$, and later on the given measure $\varphi$. For the mean width $W$, it was shown in \cite{Sch87} that
\begin{equation}\label{1.3}
n^{-2/(d+1)} \ll W(K) -\bE W(K_n) \ll n^{-1/d}.
\end{equation}
The orders are best possible; they are attained by sufficently smooth bodies on the right side of (\ref{1.2}) and the left side of (\ref{1.3}), and by polytopes on the left side of (\ref{1.2}) and the right side of (\ref{1.3}). This change of optimality makes it difficult to conjecture how a common generalization of (\ref{1.2}) and (\ref{1.3}) to general intrinsic volumes might look like. Although a guess has been formulated in \cite[p. 675]{Bar89}, this has remained one of the major mysteries in this area.

It should be mentioned that it follows from \cite{Gru83} that for most convex bodies (in the sense of Baire category) the middle terms in (\ref{1.2}) and (\ref{1.3}) oscillate, as $n\to\infty$, between the orders given by the left and right sides. More precise formulations are found in \cite[Thm. 5]{BL88} and \cite[p. 305]{Sch87}. This shows that, for general convex bodies, two-sided inequalities of type (\ref{1.2}), (\ref{1.3}) with optimal orders are the best one can expect (up to the involved constants).

Vaguely `dual' to the preceding are questions about the approximation of a convex body by the intersection of random closed halfspaces containing the body. Such questions have been treated in the plane in \cite{RS68} and in higher dimensions in \cite{BFH10}, \cite{BS10}, \cite{FHZ16}, \cite{Kal90}.

A common feature of these investigations is that a fixed number $n$ of independent random objects, points or hyperplanes, is considered, and in the end this number $n$ tends to infinity. An arguably more natural model starts with a stationary Poisson process, either of points or of hyperplanes, which is then restricted, either to the points contained in the considered convex body or to the hyperplanes not intersecting the body. The intensity of the Poisson process is finally assumed to increase to infinity. For point processes, relevant investigations are \cite{BR10}, \cite{CY14}, \cite{Par11, Par12}, \cite{Rei05}, and hyperplane processes are considered in \cite{Kal90}.

The setting in this paper consists in a stationary Poisson hyperplane process $X$ and a convex body $K$ with interior points in $\R^d$. The {\em $K$-cell} of $X$ is the random polytope defined by 
\begin{equation}\label{1.3a} 
Z_K:=\bigcap_{H\in X,\,H\cap K=\emptyset} H^-(K),
\end{equation}
where $H^-(K)$ denotes the closed halfspace bounded by $H$ that contains $K$. If the intensity of $X$ tends to infinity, the $K$-cell $Z_K$ may or may not approximate $K$, depending on the directional distribution of $X$ (an even probability measure on the unit sphere) in relation to properties of $K$. In \cite{HS14}, the approximation was measured in terms of the Hausdorff metric, and various situations of good approximation were investigated. For example, $Z_K$ converges almost surely to $K$ in the Hausdorff metric as the intensity of $X$ tends to infinity, if and only if the support of the directional distribution of $X$ contains the support of the area measure of $K$.

The majority of investigations on random approximation deals with the asymptotic behavior of geometric functionals, such as volume, mean width, number of $k$-faces, of the approximating random polytopes. In the present setting, a first result of this type was proved in \cite{Kal90}. We assume now that the stationary Poisson hyperplane process $X$ has intensity $n\in\N$, and we denote the corresponding $K$-cell by $Z_K^{(n)}$. It is assumed further that the directional distribution $\varphi$ of $X$ has a positive, continuous density with respect to spherical Lebesgue measure. Under these assumptions, Kaltenbach \cite{Kal90} proved that 
\begin{equation}\label{1.4}
n^{-2/(d+1)} \ll \bE V(Z_K^{(n)})-V(K) \ll n^{-1/d}.
\end{equation}
The proof can be considered as a `dualization' (in a non-precise sense) of that of (\ref{1.3}) and an extension to Poisson processes.

The purpose of this note is to obtain a similar counterpart to (\ref{1.2}), and thus a result of type (\ref{1.4}) with the volume replaced by the mean width $W$ (observe that under dualization, volume and mean width interchange their roles, roughly). We have to assume now that the stationary Poisson hyperplane process $X$ is also isotropic, that is, its distribution is invariant under rotations. 

\begin{theorem}\label{T1.1}
Let $X$ be a stationary and isotropic Poisson hyperplane process in $\R^d$ of intensity $n$. Let $K\subset \R^d$ be a convex body with interior points, and let $Z_K^{(n)}$ denote the $K$-cell of $X$. Then
\begin{equation}\label{1.5}
n^{-1}\ln^{d-1}n \ll \bE W(Z_K^{(n)}) - W(K) \ll  n^{-2/(d+1)}.
\end{equation}
\end{theorem}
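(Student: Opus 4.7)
The plan is to dualize, in a loose sense, the classical arguments that established (\ref{1.2}) for the volume of an inscribed random polytope: under polarity, volume and mean width interchange, and (\ref{1.5}) is the mean-width counterpart of (\ref{1.2}). As a preliminary reduction, the integral representation of the mean width combined with Fubini gives
\[
\bE W(Z_K^{(n)}) - W(K) = c_d\int_{\Sd} \bE\bigl[h_{Z_K^{(n)}}(u) - h_K(u)\bigr]\,\sigma(du),
\]
with $c_d>0$ a dimensional constant, and since $Z_K^{(n)}\supseteq K$ almost surely,
\[
\bE\bigl[h_{Z_K^{(n)}}(u) - h_K(u)\bigr] = \int_0^\infty \bP\bigl(h_{Z_K^{(n)}}(u) > h_K(u) + s\bigr)\, ds.
\]
Both estimates in (\ref{1.5}) therefore reduce to controlling the tail probability of the excess of the support function in a fixed direction, integrated over directions.

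\emph{Upper bound} $\bE W(Z_K^{(n)}) - W(K) \ll n^{-2/(d+1)}$. For each $u$, I would show
\[
\bP\bigl(h_{Z_K^{(n)}}(u) > h_K(u)+s\bigr) \ll \exp\bigl(-c\,n\, s^{(d+1)/2}\bigr)
\]
for small $s>0$, with $c>0$ depending on the inradius of $K$. To this end, bound the left side by the expected number of vertices of $Z_K^{(n)}$ lying in the halfspace $\{\langle \cdot, u\rangle > h_K(u)+s\}$ and apply the Slivnyak--Mecke formula; this reduces the problem to estimating, for a point $y$ at excess height $s'\geq s$, the measure of hyperplanes of $X$ separating $y$ from $K$. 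By a standard cap computation on an inscribed ball of $K$ (which profits from the isotropy of $X$), this measure is of order $(s')^{(d+1)/2}$ uniformly in $u$. Integrating the exponential tail against $ds$ gives $\bE[h_{Z_K^{(n)}}(u) - h_K(u)] \ll n^{-2/(d+1)}$ uniformly in $u$, and integration over $u\in\Sd$ yields the upper bound.

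\emph{Lower bound} $\bE W(Z_K^{(n)}) - W(K) \gg n^{-1}\ln^{d-1}n$. The key monotonicity is that whenever $P \supseteq K$, the inclusion $\{H : H\cap K = \emptyset\} \supseteq \{H: H\cap P = \emptyset\}$ yields $Z_P^{(n)} \subseteq Z_K^{(n)}$; combined with $W(K)\leq W(P)$ this gives
\[
\bE W(Z_K^{(n)}) - W(K) \geq \bE W(Z_P^{(n)}) - W(P),
\]
so it suffices to prove the lower bound when $K$ is replaced by a polytope $P$ circumscribed about $K$. For such a polytope, the $K$-cell $Z_P^{(n)}$ extends beyond $P$ primarily near the vertices of $P$: in a neighborhood of a vertex $v$ with normal cone $N(P,v)$, the extension is locally a truncated Poisson Crofton-type cell adapted to the cone. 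Integrating $\bE[h_{Z_P^{(n)}}(u) - h_P(u)]$ over $u\in N(P,v)\cap\Sd$ and summing the contributions coming from flags (vertex, edge, $\ldots$, facet chains) at $v$ produces the factor $\ln^{d-1}n$, in parallel with the B\'ar\'any--Larman lower bound in \cite{BL88} for the volume of an inscribed random polytope when the body is a polytope.

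The main obstacle is the lower bound: extracting the logarithmic factor $\ln^{d-1}n$ requires a careful flag analysis near each vertex of the polytope $P$, the dual analogue of the classical combinatorial argument of B\'ar\'any and Larman, now reformulated for a hyperplane process and the support function rather than for points and the volume functional. The upper bound, by contrast, is a comparatively routine cap-measure computation that is substantially simplified by the isotropy of $X$.
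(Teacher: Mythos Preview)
Both halves of your proposal contain genuine gaps, and in each case the paper proceeds along a substantially different route.

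\textbf{Lower bound.} Your monotonicity step is backwards. If $K\subset P$, then indeed $\{H:H\cap K=\emptyset\}\supseteq\{H:H\cap P=\emptyset\}$, but intersecting over \emph{more} halfspaces produces a \emph{smaller} set: one gets $Z_K^{(n)}\subseteq Z_P^{(n)}$, not the inclusion you wrote. Consequently $\bE W(Z_K^{(n)})\le \bE W(Z_P^{(n)})$ and $W(K)\le W(P)$, and no inequality between the two differences follows; the reduction to a circumscribed polytope collapses. The paper does not reduce to polytopes at all. Instead it introduces $m(H)=\min\{W(K^x)-W(K):x\in H\}$ and the convex ``illumination body'' $K[t]=\{x:W(K^x)-W(K)\le t\}$, observes that $\bP(H\cap Z_K\neq\emptyset)\ge e^{-nm(H)}$, and obtains
\[
\bE W(Z_K)-W(K)\ \ge\ e^{-nt}\bigl[W(K[t])-W(K)\bigr],
\]
after which $t=1/n$ and the estimate $W(K[1/n])-W(K)\gg n^{-1}\ln^{d-1}n$ from \cite{BS10} finish the job. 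The flag/vertex analysis you envisage is already encoded in this last inequality, but for $K$ itself, not for an auxiliary polytope.

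\textbf{Upper bound.} The claim that an inscribed ball yields $\mu(\{H:H \text{ separates }y\text{ from }K\})\gtrsim (s')^{(d+1)/2}$ does not work: if $B\subset K$ and $H$ separates $y$ from $K$ then $H$ separates $y$ from $B$, so the inscribed ball gives an \emph{upper} bound on the separating measure, which is the wrong direction for an exponential tail. Moreover the order $(s')^{(d+1)/2}$ is not a uniform lower bound for general $K$; near a smooth boundary point it is the exact order, but near a polytope vertex the measure is of order $s'$. Handling all boundary structures simultaneously by a direct cap estimate is exactly the difficulty. The paper avoids this by a global extremality argument: it shows (Lemma~3.1) that $K\mapsto \bE W(Z_K)$ is concave and continuous on $\K_d$, then invokes Hadwiger's rounding theorem (Lemma~3.2) to conclude that among bodies of given mean width the ball maximizes $\bE W(Z_K)$. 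This reduces the upper bound to $K=B^d$, after which a polarity transform and the Slivnyak--Mecke formula connect to the known volume asymptotics for Poisson polytopes inside a ball. The concavity proof requires a non-obvious coupling via an auxiliary Poisson process on $\Sd\times[0,\infty)$, so that $Z_K$ and $Z_L$ can be realised from the \emph{same} randomness; this is the step that makes the argument go through and has no counterpart in your sketch.
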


For random polytopes generated by finitely many independent hyperplanes with a suitable distribution, depending on $K$, a similar result was proved in \cite{BS10}. Some ideas used there can be employed in the following. It turned out, however, that a proof for Poisson hyperplane processes is not straightforward and requires additional arguments. These will be presented in this note.

\section{Preliminaries}\label{sec2}

The standard scalar product of $\R^d$ is denoted by $\langle\cdot\,,\cdot\rangle$, and the induced norm by $\|\cdot\|$.  The unit ball of $\R^d$ is $B^d$, and the unit sphere is $\Sd$. Lebesgue measure on $\Rd$ is denoted by $\lambda_d$.

Hyperplanes and closed halfspaces of $\R^d$ are written in the form
$$ H(u,\tau) = \{x\in\R^d:\langle x,u\rangle=\tau\},\qquad H^-(u,\tau) = \{x\in\R^d:\langle x,u\rangle\le\tau\},$$
respectively, with $u\in\Sd$ and $\tau\in\R$. Let $\Ha$ be the space of hyperplanes in $\R^d$ with its usual topology. For a subset $M\subset\R^d$, we write
$$ \Ha_M:= \{H\in\Ha: H\cap M\not=\emptyset\}.$$

By $\K_d$ we denote the space of $d$-dimensional convex bodies (compact, convex sets with interior points) in $\R^d$. As usual, it is equipped with the Hausdorff metric, denoted by $\delta$. For $K\in\K_d$, let $R_o(K)$ be the radius of the smallest ball with center at the origin $o$ of $\R^d$ that contains $K$.

For our notation concerning point processes, we refer to \cite{SW08}, Sections 3.1 and 3.2. In particular, given a locally compact topological space $E$, we denote by $({\sf N}_s(E), {\mathcal N}_s(E))$ the measurable space of simple, locally finite counting measures on $E$. We often identify a simple counting measure $\eta\in {\sf N}_s(E)$ with its support, using $\eta(\{x\})=1$ and $x\in\eta$ synonymously. A (simple) point process in $E$ is a mapping $X:(\Omega,{\mathbf A},\bP)\to ({\sf N}_s(E), {\mathcal N}_s(E))$, where $(\Omega,{\mathbf A},\bP)$ is some some probability space, such that $\{X(C)=0\}$ is measurable  for all compact sets $C\subset E$. By $\Theta=\bE X$ we denote the intensity measure of $X$. The point process $X$ is a Poisson process if
$$ \bP(X(A)=k)=e^{-\Theta(A)} \frac{\Theta(A)^k}{k!}$$
for $k\in \N_0$ and each Borel set $A\subset E$ with $\Theta(A)<\infty$. For the independence properties of (simple) Poisson processes, we refer to \cite[Thm. 3.2.2]{SW08}. A stationary Poisson hyperplane process in $\R^d$ is a Poisson process $X$ in the space $\Ha$ of hyperplanes whose intensity measure (and hence whose distribution) is invariant under translations. The intensity measure of such a process, assumed to be $\not\equiv 0$, is of the form
$$ \Theta(A) =\gamma  \int_{\Sd}\int_{-\infty}^\infty {\mathbbm 1}_A(H(u,\tau))\,\D\tau\,\varphi(\D u)$$
for Borel sets $A\subset \Ha$. Here $\gamma>0$ is the intensity of $X$ and $\varphi$ is an even probability measure on the sphere $\Sd$, the spherical directional distribution of $X$.

We assume now that $X$ is a stationary Poisson hyperplane process in $\R^d$ of intensity $\gamma>0$ and with a non-degenerate spherical directional distribution $\varphi$. Here, `non-degenerate' means that $\varphi$ is not concentrated on any great subsphere. 

For $K\in\K_d$ we denote, as above, by $Z_K$ the $K$-cell defined by $X$ and $K$. This is a random polytope, since it is almost surely bounded. More precisely, we show the following estimate, which will later, when the intensity tends to infinity, allow us to restrict ourselves to $K$-cells contained in a sufficiently large fixed ball. 

\begin{lemma}\label{L2.1}
Let $K\in\K_d$. There are constants $a,b>0$, depending only on $\varphi$, such that
$$ \bP(R_o(Z_K) > b(R_o(K)+x)) \le 2de^{-a\gamma x}\quad\mbox{for } x\ge 0.$$
\end{lemma}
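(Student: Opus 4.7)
The plan is to bound $\bP(R_o(Z_K)>r)$ by a union of Poisson void probabilities over finitely many ``control caps'' in $\Sd$. First, non-degeneracy of $\varphi$ yields a constant $\alpha>0$ (depending only on $\varphi$) with $\sup_{v\in\supp\varphi}\langle u,v\rangle\ge\alpha$ for every $u\in\Sd$: otherwise compactness would produce $u^*\in\Sd$ with $\langle u^*,v\rangle\le 0$ for all $v\in\supp\varphi$, which combined with the evenness of $\varphi$ would force $\supp\varphi\subset(u^*)^\perp$, contradicting non-degeneracy. Using this and a continuity argument, one produces finitely many spherical caps $C_1,\ldots,C_N\subset\Sd$ and a constant $\beta>0$, all depending only on $\varphi$, with (i) $\varphi(C_i)\ge\beta$ for every $i$, and (ii) for every $u\in\Sd$ some index $i$ satisfies $\langle u,w\rangle\ge\beta$ for all $w\in C_i$. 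Concretely, cover $\Sd$ by finitely many balls $B(u^{(i)},\epsilon)$, pick $v_i\in\supp\varphi$ with $\langle u^{(i)},v_i\rangle\ge\alpha$, and take $C_i=B(v_i,\epsilon)\cap\Sd$ for $\epsilon$ small enough; the condition $v_i\in\supp\varphi$ guarantees $\varphi(C_i)>0$.

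Set $b:=2/\beta$. The key geometric reduction is
\[ \{R_o(Z_K)>b(R_o(K)+x)\}\ \subset\ \bigcup_{i=1}^N E_i, \]
where $E_i$ is the event that no hyperplane $H(w,\tau)\in X$ satisfies $w\in C_i$ and $h(K,w)<\tau\le h(K,w)+x$ (here $h(K,\cdot)$ denotes the support function of $K$). Indeed, if $z\in Z_K$ has $\|z\|>b(R_o(K)+x)$, then for $u=z/\|z\|$ and the index $i$ supplied by (ii), every $w\in C_i$ satisfies
\[ \langle z,w\rangle\ \ge\ \beta\|z\|\ >\ 2(R_o(K)+x)\ \ge\ h(K,w)+x, \]
using $h(K,w)\le R_o(K)$ for $w\in\Sd$. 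Hence any hyperplane $H(w,\tau)$ with $w\in C_i$ and $h(K,w)<\tau\le h(K,w)+x$ would place $z$ strictly in its open far halfspace and thus exclude $z$ from $Z_K$, contradicting $z\in Z_K$; therefore $E_i$ must hold for this $i$.

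Finally, the Borel set $\{H(w,\tau):w\in C_i,\ h(K,w)<\tau\le h(K,w)+x\}$ has intensity measure $\gamma x\,\varphi(C_i)\ge\gamma\beta x$, so the Poisson void formula gives $\bP(E_i)\le e^{-\gamma\beta x}$, and a union bound yields $\bP(R_o(Z_K)>b(R_o(K)+x))\le Ne^{-\gamma\beta x}$. Since $N$ depends only on $\varphi$, a routine two-regime adjustment converts this into $2d\,e^{-a\gamma x}$ for some $a>0$ depending only on $\varphi$ (use the trivial bound $\bP\le 1$ in the range where the Poisson bound exceeds $2d$, and the Poisson bound otherwise; e.g.\ $a:=\beta\min(1,\log(2d)/\log N)$ works). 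The main obstacle is constructing the finite cover with the simultaneous uniform lower bounds in (i) and (ii); this rests on combining the compactness argument for $\alpha$ with the fact that every neighborhood of a point of $\supp\varphi$ has positive $\varphi$-mass.
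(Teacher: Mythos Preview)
Your argument is correct (up to a harmless factor of two in the $\Theta$-mass of the hyperplane slab, since the parameterization $(w,\tau)\mapsto H(w,\tau)$ is two-to-one and $\varphi$ is even; this only improves the bound). However, the route differs from the paper's. The paper does not cover $\Sd$; instead it picks $2d$ vectors $\pm e_1,\dots,\pm e_d\in\supp\varphi$ that positively span $\R^d$, takes small pairwise \emph{disjoint} neighborhoods $U_1,\dots,U_{2d}$ of these directions, and chooses $b$ so that any intersection $\bigcap_{i=1}^{2d} H^-(u_i,1)$ with $u_i\in U_i$ has circumradius at most $b$. Then the $2d$ hyperplane slabs $A_i(x)=\{H(u,\tau):u\in U_i,\ R_o(K)\le\tau\le R_o(K)+x\}$ are disjoint, the corresponding Poisson counts are independent, and the estimate
\[
\bP(R_o(Z_K)>b(R_o(K)+x))\le 1-\prod_{i=1}^{2d}\bigl(1-e^{-\gamma\varphi(U_i)x}\bigr)\le 2d\,e^{-a\gamma x}
\]
follows directly from Bernoulli's inequality. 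So the paper obtains the constant $2d$ intrinsically from the number of control directions, whereas you first get a bound $N e^{-\gamma\beta x}$ with a covering number $N$ and then massage it into the form $2d\,e^{-a\gamma x}$ by shrinking $a$ and invoking the trivial bound on a short initial range. Both approaches yield what is actually used later (an exponentially decaying tail for $R_o(Z_K)$), but the paper's positive-spanning construction is a bit tighter and avoids the cosmetic two-regime adjustment; your covering argument, on the other hand, is more flexible and does not rely on the existence of $2d$ specific directions in $\supp\varphi$.
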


\begin{proof}
Since ${\rm supp}\,\varphi$, the support of the even measure $\varphi$, is not contained in a great subsphere, we can choose vectors $\pm e_1,\dots,\pm e_d\in {\rm supp}\,\varphi$ positively spanning $\R^d$. We can choose a sufficiently large constant $b$ and sufficiently small, pairwise disjoint neighborhoods $U_i$ of $e_i$, $i=1,\dots,2d$, such that each intersection
\begin{equation}\label{2.2}
P:= \bigcap_{i=1}^{2d} H^-(u_i,1) \quad \mbox{with }u_i\in U_i,\,i=1,\dots,2d,
\end{equation}
is a polytope with $R_o(P)\le b$. Let $x\ge 0$. If then the numbers $\tau_i$ are such that $R_o(K)\le \tau_i\le R_o(K)+x$ and if $u_i\in U_i$ for $i=1,\dots,2d$, then
$$ R_o\left(\bigcap_{i=1}^{2d} H^-(u_i,\tau_i)\right) \le b(R_o(K)+x).$$
The sets of hyperplanes
$$ A_i(x):= \{H(u,\tau): u\in U_i,\, R_o(K)\le \tau\le R_o(K)+x\},\quad i=1,\dots,2d,$$
are pairwise disjoint. If $X(A_i(x)) >0$ for $i=1,\dots,2d$, then $R_o(Z_K)\le b(R_o(K)+x)$. Therefore, observing that $\Theta(A_i(x)) = \gamma x\varphi(U_i)$  and choosing $0<a\le\varphi(U_i)$ for $i=1,\dots,2d$, we get
\begin{eqnarray*}
&&\bP(R_o(Z_K) > b(R_o(K)+x))\\
&&\le \bP( X(A_i(x))=0\mbox{ for at least one } i\in \{1,\dots,2d\})\\
&&= 1-\prod_{i=1}^{2d} (1-\bP( X(A_i(x))=0))\\
&&= 1-\prod_{i=1}^{2d} (1- e^{-\gamma\varphi(U_i)x})\\
&&\le  1-(1-e^{- \gamma ax})^{2d}\\
&&\le 2de^{-\gamma ax},
\end{eqnarray*}
by Bernoulli's inequality. This was the assertion.
\end{proof}

We shall need the consequence that the random variable $R_o(Z_K)$ is integrable. More generally:
\begin{corollary}\label{C2.1}
$R_o(Z_K)$ has finite moments of all orders.
\end{corollary}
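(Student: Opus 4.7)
My plan is to derive the moment bound directly from the tail estimate of Lemma~\ref{L2.1} via the standard layer-cake identity
\[
\bE [R_o(Z_K)^p] = \int_0^\infty p t^{p-1}\bP(R_o(Z_K)>t)\,\D t
\]
for $p>0$. The only work is to verify convergence of this integral.

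First I would split the integral at $t_0 := bR_o(K)$. On $[0,t_0]$ one simply uses the trivial bound $\bP(R_o(Z_K)>t)\le 1$, which contributes the finite quantity $t_0^p$. On $[t_0,\infty)$, for each such $t$ I set $x=t/b - R_o(K)\ge 0$, so that $t=b(R_o(K)+x)$, and invoke Lemma~\ref{L2.1} to get
\[
\bP(R_o(Z_K)>t) \le 2de^{-a\gamma x} = 2de^{a\gamma R_o(K)}e^{-a\gamma t/b}.
\]
Thus
\[
\int_{t_0}^\infty p t^{p-1}\bP(R_o(Z_K)>t)\,\D t \le 2d\,e^{a\gamma R_o(K)} \int_{t_0}^\infty p t^{p-1}e^{-a\gamma t/b}\,\D t,
\]
which is finite because the exponential factor dominates any polynomial.

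Adding the two pieces yields $\bE[R_o(Z_K)^p]<\infty$ for every $p>0$, which is the assertion. There is no real obstacle here; the lemma already supplies an exponential tail bound, and the corollary is merely the routine observation that exponential tails imply moments of every order.
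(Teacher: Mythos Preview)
Your proof is correct and takes essentially the same approach as the paper: both use the layer-cake representation of moments together with the exponential tail bound from Lemma~\ref{L2.1}. The paper's version is slightly terser, writing $\bE R_o(Z_K)^k=\int_0^\infty \bP(R_o(Z_K)^k>t)\,\D t$ and then substituting $t=[b(R_o(K)+x)]^k$, while you instead differentiate the layer-cake formula and split the integral at $t_0=bR_o(K)$; the substance is identical.
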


In fact, for $k\in \N$ we have
$$ \bE R_o(Z_K)^k = \int_\Omega R_o(Z_K)^k\,\D\bP= \int_0^\infty \bP(R_o(Z_K)^k>t)\,\D t.$$
The substitution $t=[b(R_o(K)+x)]^k$ for sufficiently large $t$, together with Lemma \ref{L2.1}, shows that $\bE R_o(Z_K)^k <\infty$.

\section{Proof of the upper bound}\label{sec3}

The approach to proving the right-hand estimate of (\ref{1.5}) consists in establishing an extremal property of balls and then to find a connection to a known result on approximation of balls by convex hulls of finitely many random points. Since we are dealing with Poisson processes, this requires extra arguments in either step.

Let $X$ be a stationary Poisson hyperplane process in $\R^d$, with a nondegenerate spherical directional distribution $\varphi$ and with intensity $\gamma$. If a convex body $K\in\K_d$ is given, we denote by $Z_K$, as in  (\ref{1.3a}), the $K$-cell defined by $X$ and $K$. In order to be able to compare $Z_K$ and $Z_L$ for different $K,L\in \K_d$, we use an auxiliary Poisson process. For this, we consider the product space $E:=\Sd\times[0,\infty)$ with the product measure $\varphi\otimes \lambda_+$, where $\lambda_+$ is the Lebesgue measure on $[0,\infty)$. Let $Y$ be the Poisson process on $E$ with intensity measure $2\gamma\varphi\otimes\lambda_+$. (Its existence and uniqueness up to stochastic equivalence follows, e.g., from \cite[Thm. 3.2.1]{SW08}.) Let $M(E)$ denote the set of all locally finite subsets $S\subset E$ with the property that the set $\{u\in\Sd: (u,t)\in S \mbox{ for some }t\ge 0\}$ positively spans $\R^d$. For $\eta\in{\sf N}_s(E)$ with support in $M(E)$, we define
$$ P(\eta,K):= \bigcap_{(u,t)\in{\rm supp}\,\eta} H^-(u,h(K,u)+t)$$
for $K\in\K_d$, where $h(K,\cdot)$ denotes the support function of $K$. This is a polytope containing $K$. We shall later see that the random polytope $P(Y,K)$ is stochastically equivalent to the $K$-cell $Z_K$ defined by the hyperplane process $X$. We use the random polytopes $P(Y,K)$ to show that the function $K\mapsto \bE W(Z_K)$ is concave and continuous on $\K_d$.

\begin{lemma}\label{L3.1}
For $K,L\in\K_d$ and $\alpha\in[0,1]$,
\begin{equation}\label{3.1}
\bE W(Z_{(1-\alpha)K+\alpha L}) \ge (1-\alpha)\bE W(Z_K)+\alpha \bE W(Z_L).
\end{equation}
The functional $K\mapsto \bE W(Z_K)$ is continuous on $\K_d$.
\end{lemma}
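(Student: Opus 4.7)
The plan rests on the pathwise coupling supplied by the auxiliary process $Y$. Since the stochastic equivalence $P(Y,K)\stackrel{d}{=}Z_K$ for each fixed $K$ yields $\bE W(Z_K)=\bE W(P(Y,K))$, I would analyse the deterministic family $K\mapsto P(\eta,K)$ for a realisation $\eta$ of $Y$ and then take expectations. Integrability is handled throughout by the bound $W(P)\le 2R_o(P)$ (mean width is monotone under inclusion and a ball of radius $r$ has mean width $2r$), together with Corollary \ref{C2.1}.

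For the concavity \eqref{3.1}, fix $\eta\in M(E)$ and set $K':=(1-\alpha)K+\alpha L$. Since support functions are Minkowski-linear, $h(K',u)+t=(1-\alpha)(h(K,u)+t)+\alpha(h(L,u)+t)$ for every $(u,t)\in\eta$. Combining this with the estimates $h(P(\eta,K),u)\le h(K,u)+t$ and $h(P(\eta,L),u)\le h(L,u)+t$, and the identity $h((1-\alpha)A+\alpha B,\cdot)=(1-\alpha)h(A,\cdot)+\alpha h(B,\cdot)$, I obtain the pathwise inclusion
\[
(1-\alpha)P(\eta,K)+\alpha P(\eta,L)\subseteq P(\eta,K').
\]
Because the mean width is both monotone under inclusion and Minkowski-linear, this gives the pointwise inequality $W(P(\eta,K'))\ge(1-\alpha)W(P(\eta,K))+\alpha W(P(\eta,L))$. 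Integrating with respect to $\bP$ and invoking the stochastic equivalence then proves \eqref{3.1}.

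For continuity, let $K_n\to K$ in the Hausdorff metric, so $h(K_n,\cdot)\to h(K,\cdot)$ uniformly on $\Sd$. For $\bP$-almost every $Y$, the bounded polytope $P(Y,K)$ is defined by only finitely many active facet hyperplanes (those corresponding to the finitely many points of $Y$ whose associated hyperplane meets a fixed ball containing $P(Y,K)$), and a uniform perturbation of the offsets by $o(1)$ preserves this active set outside a $\bP$-null configuration. Standard continuity of finite intersections of halfspaces with respect to their offsets then delivers $P(Y,K_n)\to P(Y,K)$ in Hausdorff, hence $W(P(Y,K_n))\to W(P(Y,K))$ almost surely. To swap limit and expectation, I would observe that $K_n\subseteq K^*:=K+B^d$ for all large $n$, and that the pathwise monotonicity $K\subseteq K'\Rightarrow P(Y,K)\subseteq P(Y,K')$ (which is immediate from $h(K,u)\le h(K',u)$) yields the integrable majorant $W(P(Y,K_n))\le W(P(Y,K^*))\le 2R_o(P(Y,K^*))$, finite in mean by Corollary \ref{C2.1}. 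Dominated convergence then gives $\bE W(Z_{K_n})\to\bE W(Z_K)$.

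The principal technical obstacle is making the almost-sure Hausdorff continuity of $K\mapsto P(Y,K)$ rigorous: one must verify that the exceptional set of realisations $Y$ for which a small change of $K$ discontinuously alters the active facet structure has probability zero. This in turn relies on the absolute continuity of the intensity measure $2\gamma\,\varphi\otimes\lambda_+$ in the offset variable, which excludes any coincidence of a process point with a critical tangency configuration.
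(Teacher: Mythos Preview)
Your concavity argument is essentially identical to the paper's: both use the pathwise inclusion $(1-\alpha)P(\eta,K)+\alpha P(\eta,L)\subseteq P(\eta,(1-\alpha)K+\alpha L)$, apply the monotonicity and Minkowski linearity of $W$, and then integrate, invoking the stochastic equivalence $P(Y,K)\stackrel{d}{=}Z_K$.

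For continuity, your argument is correct but takes a different and somewhat more circuitous route than the paper. The paper avoids any discussion of active facets or exceptional null sets by proving a \emph{deterministic, quantitative} estimate: assuming $rB^d\subset K,L$ and $P(\eta,K),P(\eta,L)\subset\rho B^d$, it shows via the inclusion $L\subseteq(1+\delta/r)K$ (where $\delta=\delta(K,L)$) that $P(\eta,L)\subseteq(1+\delta/r)P(\eta,K)$, whence
\[
\delta\bigl(P(\eta,K),P(\eta,L)\bigr)\le \frac{\rho}{r}\,\delta(K,L)
\]
for \emph{every} $\eta\in M(E)$. This Lipschitz-type bound immediately gives pathwise Hausdorff continuity with no probabilistic exceptional set whatsoever. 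Your worry that ``a small change of $K$ discontinuously alters the active facet structure'' on some null set is therefore misplaced: the map $K\mapsto P(\eta,K)$ is continuous for all $\eta\in M(E)$, and even if the combinatorial type of the polytope changes, the intersection of finitely many halfspaces varies continuously in the offsets as long as it stays bounded and nonempty --- no absolute-continuity argument for the intensity measure is needed. Your domination via monotonicity, $P(Y,K_n)\subseteq P(Y,K+B^d)$, is perfectly valid and is a minor variant of the paper's bound $P(\eta,K_i)\subseteq(1+1/r)R_o(P(\eta,K))B^d$; both lead to an integrable majorant through Corollary~\ref{C2.1}.
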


\begin{proof}
For $\eta\in{\sf N}_s(E)$ we have
$$ (1-\alpha)P(\eta,K) +\alpha P(\eta,L) \subseteq P(\eta, (1-\alpha)K+\alpha L),$$
as follows immediately from the definition of $P(\eta,K)$ and the linearity properties of the support function. The monotonicity and linearity properties of the mean width yield
$$ W(P(\eta, (1-\alpha)K+\alpha L)) \ge (1-\alpha) W(P(\eta,K))+\alpha W(P(\eta,L)).$$
Denoting by $\bP_Y$ the distribution of $Y$, we have
$$ \bE W(P(Y,K)) = \int_{{\sf N}_s(E)} W(P(\eta,K))\,\bP_Y(\D\eta),$$
hence we can conclude that
\begin{equation}\label{3.2}
\bE W(P(Y,(1-\alpha)K+\alpha L))\ge (1-\alpha)\bE W(P(Y,K)) +\alpha\bE W(P(Y,L)).
\end{equation}

We define the Poisson hyperplane process $X_K$ by
$$ X_K(A):= X(A\setminus \Ha_{{\rm int}\,K})$$
for Borel sets $A\subset\Ha$. Its intensity measure is given by
\begin{eqnarray*}
\bE  X_K(A) &=& \Theta(A\setminus \Ha_{{\rm int}\,K})\\
&=& \gamma\int_{\Sd} \int_{-\infty}^\infty {\mathbbm 1}\{H(u,\tau)\in A\}{\mathbbm 1}\{H(u,\tau)\cap{\rm int}\,K=\emptyset\}\,\D\tau\,\varphi(\D u)\\
&=& \gamma\int_{\Sd} \int_{-\infty}^{-h(K,-u)} {\mathbbm 1}\{H(u,\tau)\in A\}\,\D\tau\,\varphi(\D u)\\
&& + \gamma\int_{\Sd} \int_{h(K,u)}^{\infty} {\mathbbm 1}\{H(u,\tau)\in A\}\,\D\tau\,\varphi(\D u)\\
&=&  2\gamma\int_{\Sd} \int_{h(K,u)}^{\infty} {\mathbbm 1}\{H(u,\tau)\in A\}\,\D\tau\,\varphi(\D u),
\end{eqnarray*}
where we have used that $\varphi$ is an even measure. Next, we define a mapping $F_K:E\to \Ha$ by
$$ F_K(u,t):= H(u,h(K,u)+t)$$
and denote for $\eta\in{\sf N}_s(E)$ by $F_K(\eta)$ the pushforward of $\eta$ under $F_K$. Then $F_K(Y)$ is a Poisson hyperplane process. For its intensity measure we obtain, for Borel sets $A\subset\Ha$,
\begin{eqnarray*}
\bE(F_K(Y))(A) &=& \bE Y(F_K^{-1}(A))\\
&=& 2\gamma \int_{\Sd} \int_0^\infty {\mathbbm 1}\{(u,t)\in F_K^{-1}(A)\}\,\D t\,\varphi(\D u)\\
&=& 2\gamma \int_{\Sd} \int_0^\infty {\mathbbm 1}\{H(u,h(K,u)+t)\in A\}\,\D t\,\varphi(\D u)\\
&=& 2\gamma \int_{\Sd} \int_{h(K,u)}^\infty {\mathbbm 1}\{H(u,\tau)\in A\}\,\D \tau\,\varphi(\D u).
\end{eqnarray*}
Thus, $X_K$ and $F_K(Y)$ have the same intensity measure. Since either of them is a Poisson process, they are stochastically equivalent. It follows that the zero cell $Z_K$ is stochastically equivalent to the random polytope $P(Y,K)$. Therefore, (\ref{3.2}) yields the assertion (\ref{3.1}).

To prepare for the continuity assertion, let $K,L\in\K_d$ and suppose, without loss of generality, that $rB^d\subset K,L$ for some $r>0$ (note that $\bE W(Z_K)$ is invariant under translations of $K$). Let $\eta\in {\sf N}_s(E)$ be such that ${\rm supp}\,\eta\in M(E)$. Let $\rho>0$ be any number such that $P(\eta,K),P(\eta,L)\subset\rho B^d$ . For the Hausdorff distance $\delta$, we state that
\begin{equation}\label{3.3}
\delta(P(\eta,K),P(\eta,L)) \le \frac{\rho}{r}\delta(K,L).
\end{equation}
In fact, setting $\delta(K,L)=:\delta$, we have
$$ L\subseteq K+\delta B^d\subseteq K+\frac{\delta}{r}K=\left(1+\frac{\delta}{r}\right)K.$$
Therefore, 
$$ h(L,u)+t\le \left(1+\frac{\delta}{r}\right)h(K,u)+t \le \left(1+\frac{\delta}{r}\right)(h(K,u)+t)$$
for $t\ge 0$, which yields
$$ H^-(u,h(L,u)+t) \subseteq\left(1+\frac{\delta}{r}\right)H^-(u,h(K,u)+t)$$
and thus
\begin{eqnarray}\label{3.3b} 
P(\eta,L) &\subseteq& \left(1+\frac{\delta}{r}\right)P(\eta,K) = P(\eta,K) +\frac{\delta}{r} P(\eta,K)\\
& \subseteq& P(\eta,K) +\frac{\delta}{r}\rho B^d.\nonumber
\end{eqnarray}
Together with the analogous inclusion with $K$ and $L$ interchanged, this gives (\ref{3.3}).

Now let $K,K_i\in\K_d$ for $i\in\N$ and suppose that $K_i\to K$ in the Hausdorff metric, as $i\to\infty$. We may assume that $rB^d\subset K$ for some $r>0$, further $\delta(K_i,K) \le 1$ and $rB^d\subset K_i$ for all $i$. For $\eta\in{\sf N}_s(E)$ with ${\rm supp}\,\eta\in M(E)$ we have, by (\ref{3.3b}),
$$ P(\eta,K_i)\subseteq\left(1+\frac{\delta(K_i,K)}{r}\right)P(\eta,K) \subseteq \left(1+\frac{1}{r}\right)R_o(P(\eta,K))B^d$$
and hence
$$ P(\eta,K_i), P(\eta,K) \subseteq R_\eta B^d\quad\mbox{with } R_\eta= \left(1+\frac{1}{r}\right) R_o(P(\eta,K)).$$
Relation (\ref{3.3}) gives
$$ \delta(P(\eta,K_i),P(\eta,K)) \le \frac{R_\eta}{r}\delta(K_i,K),$$
which implies
$$ W(P(\eta,K_i)) \to W(P(\eta,K)) \quad\mbox{as } i\to\infty.$$
It also implies
$$ W(P(\eta,K_i)) \le 2\left(1+\frac{1}{r}\left(1+\frac{1}{r}\right)\right)R_o(P(\eta,K)).$$
By Corollary \ref{C2.1},
$$ \int_{{\sf N}_s(E)}R_o(P(\eta,K))\,\bP_Y(\D \eta) = \bE R_o(P(Y,K)) = \bE R_o(Z_K) <\infty.$$
Therefore, the dominated convergence theorem yields
$$ \int_{{\sf N}_s(E)} W(P(\eta,K_i))\,\bP_Y(\D \eta) \to \int_{{\sf N}_s(E)} W(P(\eta,K))\,\bP_Y(\D \eta),$$
or equivalently $\bE W(Z_{K_i}) \to \bE W(Z_K)$ as $i\to\infty$. This proves the continuity assertion.
\end{proof}

From now on, we assume that the stationary Poisson hyperplane process $X$ is isotropic and has intensity $n$. Then its intensity measure is given by $\Theta=n\mu$ with
$$ \mu=\int_{\Sd} \int_{-\infty}^\infty {\mathbbm 1}\{H(u,\tau)\in\cdot\}\,\D\tau\,\D\sigma,$$
where $\sigma$ is the normalized spherical Lebesgue measure. For convex bodies $K,L\in\K_d$ with $K\subset L$ we have
\begin{eqnarray}\label{3.4a}
&& \int_{\Ha\setminus\Ha_K} {\mathbbm 1}\{H\cap L\not=\emptyset\}\,\mu(\D H)\nonumber\\
&&=  \int_{\Sd} \int_{-\infty}^\infty {\mathbbm 1}\{H(u,\tau)\cap L\not=\emptyset\}{\mathbbm 1}\{H(u,\tau)\cap K=\emptyset\}\,\D\tau\,\sigma(\D u)\nonumber\\
&&=  2\int_{\Sd}[h(L,u)-h(K,u)]\,\sigma(\D u)\nonumber\\
&&= W(L)-W(K).
\end{eqnarray}

\begin{lemma}\label{L3.2}
If $X$ is isotropic, then the functional 
$$ K\mapsto \frac{\bE W(Z_K)}{W(K)},\quad\K\in\K_d,$$
attains its maximum at balls.
\end{lemma}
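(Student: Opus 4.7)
The approach is to combine the concavity and continuity of $K\mapsto \bE W(Z_K)$ from Lemma \ref{L3.1} with the rotation invariance supplied by isotropy, via a Minkowski rotation-averaging argument.

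First I would record that, since $X$ is isotropic, for every rotation $\vartheta\in \SO(d)$ the processes $X$ and $\vartheta^{-1}X$ are stochastically equivalent, hence $Z_{\vartheta K}$ and $\vartheta Z_K$ have the same distribution. Because the mean width is rotation invariant, this yields $\bE W(Z_{\vartheta K})=\bE W(Z_K)$ for every $K\in\K_d$ and every $\vartheta\in\SO(d)$. Next, given $K\in\K_d$, I would introduce the rotation-averaged body $\bar K$ defined (via support functions) by
$$ h(\bar K,u):=\int_{\SO(d)} h(K,\vartheta^{-1}u)\,\nu(\D\vartheta),\qquad u\in\Sd,$$
where $\nu$ is the normalized Haar measure on $\SO(d)$. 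By invariance of $\nu$, $h(\bar K,u)$ is independent of $u$, so $\bar K$ is a ball centered at the origin; the linearity and rotation invariance of $W$ give $W(\bar K)=W(K)$.

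Then I would extend the two-body concavity inequality (\ref{3.1}) of Lemma \ref{L3.1} to an integral Jensen inequality over $\SO(d)$: by iteration, concavity holds for every finite Minkowski convex combination $\sum_j \lambda_j\, \vartheta_j K$, and a standard weak approximation of $\nu$ by discrete probability measures $\nu_m$ supported on finitely many rotations produces Minkowski averages $\bar K_m$ that converge to $\bar K$ in the Hausdorff metric. The continuity of $K\mapsto \bE W(Z_K)$ from Lemma \ref{L3.1}, combined with the fact that $\bE W(Z_{\vartheta_j K})=\bE W(Z_K)$ for each $\vartheta_j$, lets me pass to the limit and conclude
$$ \bE W(Z_{\bar K})\;\ge\;\int_{\SO(d)}\bE W(Z_{\vartheta K})\,\nu(\D\vartheta)\;=\;\bE W(Z_K).$$
Dividing by $W(\bar K)=W(K)$ gives $\bE W(Z_{\bar K})/W(\bar K)\ge \bE W(Z_K)/W(K)$, and since $\bar K$ is a ball the maximum is attained at balls.

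The only step requiring care is the upgrade from the two-body concavity of Lemma \ref{L3.1} to the integral inequality over the compact group $\SO(d)$. This is where continuity (also in Lemma \ref{L3.1}) is essential: one needs Hausdorff-continuity of $K\mapsto\bar K_m$ under weak convergence $\nu_m\to \nu$, which follows from continuity of the support function, together with Hausdorff-continuity of $K\mapsto\bE W(Z_K)$, to justify exchanging limit and expectation on the left-hand side. The right-hand side is trivial because $\bE W(Z_{\vartheta K})$ is constant in $\vartheta$ by isotropy.
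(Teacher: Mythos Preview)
Your proof is correct and takes essentially the same route as the paper. The paper simply invokes Hadwiger's \emph{Zweites Kugelungstheorem} (via \cite{Had57}, \cite[Thm.~3.3.5]{Sch14}, and the argument in \cite[p.~621]{BS10}) for the fact that a rigid-motion-invariant, concave, continuous functional on $\K_d$ is maximized at balls among bodies of fixed mean width; your Minkowski rotation-averaging argument, together with the discrete-to-integral passage using continuity, is precisely the standard proof of that theorem written out in full.
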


\begin{proof}
If $X$ is isotropic, then the functional $K\mapsto \bE W(Z_K)$ is invariant under rigid motions. Since by Lemma \ref{L3.1} it is concave and continuous on $\K_d$, it is well known that on the set of convex bodies $K\in \K_d$ with given mean width $W(K)$ it attains its maximum at balls. The proof, which uses Hadwiger's `Zweites Kugelungstheorem' (\cite[pp. 170--171]{Had57}; reproduced in \cite[Thm. 3.3.5]{Sch14}), is carried out in \cite[p. 621]{BS10}.
\end{proof}

To take advantage of the preceding lemma, we connect this to a known asymptotic result about convex hulls of i.i.d. random points in a ball. First we write the result of Lemma \ref{L3.2} in the form
\begin{equation}\label{3.4}
\bE W(Z_K) - W(K) \ll \bE W(Z_{B^d})-W(B^d).
\end{equation}
We recall that here $Z_{B^d}=Z_{B^d}^{(n)}$ and that we intend to let $n$ tend to infinity. In view of this, we choose  a number $R>b$, where $b$ is the constant appearing in Lemma \ref{L2.1} for $K=B^d$, and state that
\begin{equation}\label{3.5}
\bE W(Z_{B^d}) - \bE [W(Z_{B^d}){\mathbbm 1}\{R_o(Z_{B^d})<R\}]=O(n^{-1})
\end{equation}
as $n\to \infty$ (where the constant involved in $O$ depends on $R$). For the proof, we note that the left side of (\ref{3.5}) can be estimated by
\begin{eqnarray*}
\bE [W(Z_{B^d}){\mathbbm 1}\{R_o(Z_{B^d})\ge R\}] & \le& \bE [2R_o(Z_{B^d}){\mathbbm 1}\{R_o(Z_{B^d})\ge R\}]\\
&=&2\int_\Omega R_o(Z_{B^d}){\mathbbm 1}\{R_o(Z_{B^d})\ge R\}\,\D\bP\\
&=& 2\int_0^\infty \bP(R_o(Z_{B^d}){\mathbbm 1}\{R_o(Z_{B^d}\ge R\}>t)\,\D t\\
&=& 2R\,\bP(R_o(Z_{B^d})\ge R) + 2\int_R^\infty \bP(R_o(Z_{B^d})>t)\,\D t.
\end{eqnarray*}
Lemma \ref{L2.1} provides an estimate for $\bP(R_o(Z_{B^d})\ge b(1+x))$. Inserting this for suitable values of $x$, we obtain (\ref{3.5}).

We use the bijective mapping
$$ \Delta: \Ha\setminus \Ha_{B^d} \to B^d\setminus\{o\},\quad \Delta(H(u,\tau))= \tau^{-1}u.$$
Let $\kappa_0$ be the pushforward of the measure $\mu$, restricted to $\Ha\setminus \Ha_{B^d}$, under $\Delta$. Then
\begin{equation}\label{3.5a} 
\kappa_0(A) =\frac{2}{\omega_d}\int_A \|x\|^{-(d+1)}\lambda_d(\D x)
\end{equation}
for Borel sets $A\subset B^d\setminus\{o\}$, where $\omega_d$ is the surface area of the unit sphere. The measure $\kappa_0$ is infinite, but finite on compact subsets of $B^d\setminus\{o\}$.

Let $Y_n$ denote the Poisson point process in $\R^d$ with intensity measure $n\kappa_0$. Let $Q_n$ be the convex hull of $Y_n$. Then $Q_n$ is a random polytope, which is stochastically equivalent to the polar of $Z_{B^d}$. With the constant $R>b$ chosen above, we set $r=1/R$ and $B_r= rB^d$. By (\ref{3.4a}), we have
$$ W(Z_{B^d})-W(B^d) = \int_{\Ha\setminus \Ha_{B^d}} {\mathbbm 1}\{H\cap Z_{B^d}\not=\emptyset\}\,\mu(\D H)=\kappa_0(B^d\setminus Q_n),$$
hence
$$ \bE[(W(Z_{B^d})-W(B^d)){\mathbbm 1}\{R_o(Z_{B^d})<R\}]= \bE[\kappa_0(B^d\setminus Q_n){\mathbbm 1}\{B_r\subset Q_n\}].
$$
Now it follows from (\ref{3.4}) and (\ref{3.5}) that
\begin{equation}\label{3.8}
\bE W(Z_K)-W(K) \ll \bE[\kappa_0(B^d\setminus Q_n){\mathbbm 1}\{B_r\subset Q_n\}]+O(n^{-1}).
\end{equation}

To express the latter expectation in a suitable way, we note that $Q_n$ is almost surely a simplicial polytope, hence each of its facets is the convex hull of $d$ points of $Y_n$. For any $d$ points $x_1,\dots,x_d\in Y_n$ (almost surely, they are affinely independent and their affine hull does not contain $o$), we define
$$ S(x_1,\dots,x_d):= B^d\setminus H^-(x_1,\dots,x_d),$$
where $H^-(x_1,\dots,x_d)$ is the closed halfspace bounded by ${\rm aff}\{x_1,\dots,x_d\}$ that contains $o$. Further, we define
$$ T(x_1,\dots,x_d):= S(x_1,\dots,x_d) \cap {\rm pos}\{x_1,\dots,x_d\}.$$
Then we have
\begin{eqnarray*}
&& \kappa_0(B^d\setminus Q_n){\mathbbm 1}\{B_r\subset Q_n\}\\
&& = \sum_{(x_1,\dots,x_d)\in (Y_n)^d_{\not=}} {\mathbbm 1}\{Y_n(S(x_1,\dots,x_d))=0\}\kappa_0(T(x_1,\dots,x_d)){\mathbbm 1}\{B_r\subset Q_n\},
\end{eqnarray*}
where $\eta^d_{\not=}$ denotes the set of ordered $d$-tuples of pairwise different elements from the support of $\eta$. 
We note that if $B_r\subset Q_n$, then points $x_1,\dots,x_d\in Y_n$ with $Y_n(S(x_1,\dots,x_d))=0$ automatically satisfy $x_1,\dots,x_d\in B^d\setminus B_r$ and ${\rm aff}\{x_1,\dots,x_d\}\cap B_r=\emptyset$ a.s. Therefore,
\begin{eqnarray*}
&& \kappa_0(B^d\setminus Q_n){\mathbbm 1}\{B_r\subset Q_n\}\\
&& = \sum_{(x_1,\dots,x_d)\in (Y_n)^d_{\not=}} {\mathbbm 1}\{Y_n(S(x_1,\dots,x_d))=0,\,B_r\subset Q_n\}\kappa_0(T(x_1,\dots,x_d))\\
&& \hspace{4mm}\times\,{\mathbbm 1}\{x_1,\dots,x_d\in B^d\setminus B_r\}{\mathbbm 1}\{{\rm aff}\{x_1,\dots,x_d\}\cap B_r=\emptyset\}.
\end{eqnarray*} 

Using the Slivnyak--Mecke formula (see, e.g.,  \cite[Cor. 3.2.3]{SW08}) and noting that $n\kappa_0$ is the intensity measure of $Y_n$, we obtain
\begin{eqnarray*}
&& \bE[\kappa_0(B^d\setminus Q_n){\mathbbm 1}\{B_r\subset Q_n\}]\\
&& = n^d \int_{B^d\setminus B_r}\dots\int_{B^d\setminus B_r} \bE {\mathbbm 1}\{Y_n(S(x_1,\dots,x_d))=0,\,B_r\subset {\rm conv}(Y_n\cup\{x_1,\dots,x_d\})\}\\
&& \hspace{4mm}\times\,\kappa_0(T(x_1,\dots,x_d)){\mathbbm 1}\{{\rm aff}\{x_1,\dots,x_d\}\cap B_r=\emptyset\}\,\kappa_0(\D x_1)\cdots\kappa_0(\D x_d).
\end{eqnarray*}

Let $\lambda_0:= (2/\omega_d)\lambda_d$. For Borel sets $A\subset B^d\setminus B_r$ we have 
$$\lambda_0(A)\le \kappa_0(A)\le r^{-(d+1)}\lambda_0(A).$$ 
For fixed $x_1,\dots,x_d\in B^d\setminus B_r$ with ${\rm aff}\{x_1,\dots,x_d\}\cap B_r=\emptyset$, we have
\begin{eqnarray*} 
&& \bE {\mathbbm 1}\{Y_n(S(x_1,\dots,x_d))=0,\,B_r\subset {\rm conv}(Y_n\cup\{x_1,\dots,x_d\}\}\\
&& \le  \bE {\mathbbm 1}\{Y_n(S(x_1,\dots,x_d))=0\}\\ 
&& = e^{-n\kappa_0(S(x_1,\dots,x_d))}\\
&& \le e^{-n\lambda_0(S(x_1,\dots,x_d))}.
\end{eqnarray*}

Therefore, we can estimate
\begin{eqnarray*}
&& \bE[\kappa_0(B^d\setminus Q_n){\mathbbm 1}\{B_r\subset Q_n\}]\\
&& \ll  n^d\int_{B^d}\dots\int_{B^d} e^{-n\lambda_0(S(x_1,\dots,x_d))} \,\lambda_0(T(x_1,\dots,x_d))\,\lambda_0(\D x_1)\cdots\lambda_0(\D x_d).
\end{eqnarray*}

Let $\widetilde Y_n$ be a Poisson point process in $\R^d$ with intensity measure $n\lambda_0$, and let
$$ \Pi_n:= {\rm conv}(\widetilde Y_n\cap B^d).$$ 
A similar application of the Slivnyak--Mecke formula as above yields that
\begin{eqnarray*}
&& \bE\lambda_0(B^d\setminus \Pi_n)\\
&&= n^d \int_{B^d}\dots\int_{B^d} e^{-n\lambda_0(S(x_1,\dots,x_d))}
\lambda_0(T(x_1,\dots,x_d))\,\lambda_0(\D x_1)\cdots\lambda_0(\D x_d).
\end{eqnarray*}
We conclude that
$$ \bE[\kappa_0(B^d\setminus Q_n){\mathbbm 1}\{B_r\subset Q_n\}] \ll \bE \lambda_0(B^d\setminus \Pi_n).$$
It follows from Lemma 1 in \cite{Rei05} that
$$ \bE \lambda_0(B^d\setminus\Pi_n) \ll n^{-2(d+1)}.$$
Together with (\ref{3.8}), this yields the upper bound in (\ref{1.5}).

\section{Proof of the lower bound}\label{sec4}

The proof of the left-hand estimate of (\ref{1.5}) requires only very few changes in the proof of the corresponding inequality in \cite[(1.3)]{BS10}, hence we can be brief.

For $x\in\R^d\setminus K$, we define $K^x:= {\rm conv}(K\cup \{x\})$ and set
$$ K[t]:=\{x\in\R^d: W(K^x)-W(K) \le t\}\quad\mbox{for }t>0.$$
Further, we define
$$ m(H):= \min\{W(K^x)-W(K):x\in H\}$$
for hyperplanes $H\in\Ha\setminus \Ha_K$, and
$$ \Ha_K(t) := \{H\in \Ha\setminus \Ha_K: m(M)\le t\} \quad\mbox{for }t>0.$$
It is shown in \cite{BS10} that $K[t]$ is convex and that $\Ha_K(t)$ is precisely the set of all hyperplanes that meet the convex body $K[t]$ but not $K$. 

Let $H\in\Ha\setminus\Ha_K$, and let $x_0\in H$ be such that $W(K^{x_0})-W(K)=m(H)$ (clearly, such a point exists). If no hyperplane of $X$ separates $x_0$ and $K$, then $H\cap Z_K\not=\emptyset$. It follows that $\bP(H\cap Z_K\not=\emptyset)$ is at least the probability that no hyperplane of $\Ha\setminus \Ha_K$ meets $K^{x_0}$, which is equal to $\exp[-n(W(K^{x_0})-W(K))] = e^{-nm(H)}$. Therefore, we obtain
\begin{eqnarray*}
\bE W(Z_K)-W(K) &=& \int_\Omega\int_{\Ha\setminus\Ha_K} {\mathbbm 1}\{H\cap Z_K\not=\emptyset\}\,\mu(\D H)\,\D\bP\\
&=& \int_{\Ha\setminus\Ha_K} \bP(H\cap Z_K\not=\emptyset)\,\mu(\D H)\\
&\ge& \int_{\Ha\setminus\Ha_K}e^{-nm(H)}\,\mu(\D H)\\
&\ge& \int_{\Ha\setminus \Ha_K} {\mathbbm 1}\{m(H)\le t\}e^{-nt}\,\mu(\D H)\\
&=& e^{-nt}\mu(\Ha_K(t))\\
&=& e^{-nt}[W(K[t])-W(K)].
\end{eqnarray*}
The choice $t=1/n$ gives
\begin{equation}\label{14.5.3a} 
\bE W(Z_K)-W(K) \ge e^{-1}[W(K[1/n])-W(K)].
\end{equation}
The remainder of the proof is now as in \cite[p. 619]{BS10}.

\noindent Author's address:\\[2mm]Rolf Schneider\\Mathematisches Institut, Albert-Ludwigs-Universit{\"a}t\\D-79104 Freiburg i. Br., Germany\\E-mail: rolf.schneider@math.uni-freiburg.de

\end{document}